\theoremstyle{plain}
\newtheorem{thm}{\protect\theoremname}[section]
\theoremstyle{definition}
\newtheorem{defn}[thm]{\protect\definitionname}
\theoremstyle{definition}
\newtheorem{example}[thm]{\protect\examplename}
\theoremstyle{plain}
\newtheorem{cor}[thm]{\protect\corollaryname}
\theoremstyle{remark}
\newtheorem{rem}[thm]{\protect\remarkname}
\newcommand{\zx}{\color{red}}
\newcommand{\xz}{\color{black}}
\numberwithin{equation}{section}
\providecommand{\corollaryname}{Corollary}
\providecommand{\definitionname}{Definition}
\providecommand{\examplename}{Example}
\providecommand{\remarkname}{Remark}
\providecommand{\theoremname}{Theorem}
\providecommand{\corollaryname}{Corollary}
\providecommand{\definitionname}{Definition}
\providecommand{\examplename}{Example}
\providecommand{\remarkname}{Remark}
\providecommand{\theoremname}{Theorem}
\begin{document}
\title{A White Noise Approach to Stochastic Currents of Brownian Motion}
\author{\textbf{Martin Grothaus}\\
 Department of Mathematics, University of Kaiserslautern,\\
 67653 Kaiserslautern, Germany\\
 Email: grothaus@mathematik.uni-kl.de \and \textbf{Herry Pribawanto
Suryawan}\\
 Department of Mathematics, Sanata Dharma University\\
 55281 Yogyakarta, Indonesia\\
 Email: herrypribs@usd.ac.id \and \textbf{Jos{\'e} Lu{\'\i}s da
Silva}\\
 CIMA, University of Madeira, Campus da Penteada,\\
 9020-105 Funchal, Portugal\\
 Email: joses@staff.uma.pt}
\date{}
\maketitle
 
\begin{abstract}
In this paper we study stochastic currents of Brownian motion $\xi(x)$,
$x\in\mathbb{R}^{d}$, by using white noise analysis. For $x\in\mathbb{R}^{d}\backslash\{0\}$
 and for $x=0\in\mathbb{R}$ we prove that the stochastic
current $\xi(x)$ is a Hida distribution. Moreover for $x=0\in\mathbb{R}^{d}$
 with $d>1$ we show that the stochastic current is not a Hida distribution.
\\
 \\
 \textbf{Keywords}: Stochastic currents, extended Skorokhod integral,
white noise analysis. 
\end{abstract}

\section{Introduction}

The concept of current is fundamental in geometric measure theory.
The simplest version of current is given by the functional 
\[
\varphi\mapsto\int_{0}^{T}(\varphi(\gamma(t)),\gamma'(t))_{\mathbb{R}^{d}}\,\mathrm{d}t,\quad0<T<\infty,
\]
in a space of vector fields $\varphi:\mathbb{R}^{d}\longrightarrow\mathbb{R}^{d}$
and $\gamma$ is a rectifiable curve in $\mathbb{R}^{d}$. Informally,
this functional may be represented via its integral kernel 
\[
\zeta(x)=\int_{0}^{T}\delta(x-\gamma(t))\gamma'(t)\,\mathrm{d}t,
\]
where $\delta$ is the Dirac delta distribution on $\mathbb{R}^{d}$.
The interested reader may find comprehensive account on the subject
in the books \cite{Federer1996,Morgan2016b}.

The stochastic analog of the current $\zeta(x)$ rises if we replace
the deterministic curve $\gamma(t)$, $t\in[0,T]$, by the trajectory
of a stochastic process $X(t)$, $t\in[0,T]$, in $\mathbb{R}^{d}$.
In this way, we obtain the following functional 
\begin{equation}
\xi(x):=\int_{0}^{T}\delta(x-X(t))\,\mathrm{d}X(t).\label{eq:stochastic-integral}
\end{equation}
The stochastic integral \eqref{eq:stochastic-integral} has to be
properly defined. Now we consider a $d$-dimensional Brownian motion
$B(t)$, $t\in[0,T]$, and the main object of our study is 
\begin{equation}
\xi(x)=\int_{0}^{T}\delta(x-B(t))\,\mathrm{d}B(t).\label{eq:Wick-type-Sintegral}
\end{equation}
In this work the stochastic integral \eqref{eq:Wick-type-Sintegral}
is interpreted as an extension of the Skorokhod integral developed
in \cite{HKPS93}. It coincides with the extension given by the adjoint
of the Malliavin gradient. There have been some other approaches to
study stochastic current, such as Malliavin calculus and stochastic
integrals via regularization, see \cite{Flandoli2005,Flandoli2009,Flandoli2010,Guo2014},
among others.

An initial study of the stochastic current (\ref{eq:Wick-type-Sintegral})
using white noise theory was done in \cite{Guo2013}. The authors
showed that $\xi(x)$ in (\ref{eq:Wick-type-Sintegral}) is well defined
as a Hida distribution for all $x\in\mathbb{R}^{d}$ and all dimensions
$d\in\mathbb{N}$. However the proof of Theorem 3.3 in \cite{Guo2013}
is not carefully written which lead the authors to an inaccurate conclusion.
In fact, for $x=0\in\mathbb{R}^{d}$, $d>1$, we show that $\xi(0)$
is not a Hida distribution. This is confirmed by first  orders  of the
chaos  expansion we obtained. Moreover, we got the impression that
the authors were not checking integrability of the integrand in \eqref{eq:stochastic-integral}.
Hence, they  cannot  apply Corollary \ref{cor:hida_integral} below.
We in turn could check the assumptions of Corollary \ref{cor:hida_integral},
below, for all  nonzero $x\in \mathbb{R}^d$, $d\in \mathbb{N}$, and for $x=0\in \mathbb{R}$. The aim of this paper is to fill this gap
and obtain kernels of first  orders  of the chaos expansion of $\xi(x)$.

The organization of the paper is as follows. Section \ref{sec:WNA}
provides some background of white noise analysis. In Section \ref{sec:Stochastic-Current}
we prove the main results of this paper on the existence of the Brownian
currents.

\section{Gaussian White Noise Analysis}

\label{sec:WNA}In this section we summarize pertinent results from
white noise analysis used throughout this work, and refer to \cite{HKPS93,KLPSW96,Kuo96}
and references therein for a detailed presentation.

\subsection{White Noise Space}

We start with the Gel'fand triple 
\[
S_{d}\subset L_{d}^{2}\subset S'_{d},
\]
where $S_{d}:=S(\mathbb{R},\mathbb{R}^{d})$, $d\in\mathbb{N}$, is
the space of vector valued Schwartz test functions, $S'_{d}$  is its
topological dual and the central Hilbert space $L_{d}^{2}:=L^{2}(\mathbb{R},\mathbb{R}^{d})$
of square integrable vector valued measurable functions. For any $f\in L_{d}^{2}$
given by $f=(f_{1},\ldots,f_{d})$ its norm is 
\[
|f|^{2}=\sum_{i=1}^{d}\int_{\mathbb{R}}|f_{i}(x)|^{2}\,\mathrm{d}x.
\]

Let $\mathcal{B}$ be the $\sigma$-algebra of cylinder sets on $S'_{d}$.
Since $S_{d}$ equipped with its standard topology is a nuclear space,
by Minlos' theorem there is a unique probability measure $\mu_{d}$
on $(S'_{d},\mathcal{B})$ with the characteristic function given
by 
\[
C(\varphi):=\int_{S'_{d}}e^{i\langle w,\varphi\rangle} \, \mathrm{d} \mu_{d}(w)=\exp\left(-\frac{1}{2}|\varphi|^{2}\right),\quad\varphi\in S_{d}.
\]
Hence, we have constructed the white noise probability space $(S'_{d},\mathcal{B},\mu_{d})$.
In the complex Hilbert space $L^{2}(\mu_{d}):=L^{2}(S'_{d},\mathcal{B},\mu_{d};\mathbb{C})$
a $d$-dimensional Brownian motion is given by 
\[
B(t,w)=(\langle w_{1},\eta_{t}\rangle,\ldots,\langle w_{d},\eta_{t}\rangle),\;w=(w_{1},\ldots,w_{d})\in S'_{d},\quad\eta_{t}:=1\!\!1_{[0,t)},\;t\ge0.
\]
In other words, $(B(t))_{t\ge0}$ consists of $d$ independent copies
of $1$-dimensional Brownian motions. For all $F\in L^{2}(\mu_{d})$
one has the Wiener-It{ô}-Segal chaos decomposition 
\[
F(w)=\sum_{n=0}^{\infty}\langle:w^{\otimes n}:,F_{n}\rangle,\quad F_{n}\in(L_{d,\mathbb{C}}^{2})^{\hat{\otimes}n},
\]
where $:w^{\otimes n}:\in(S'_{d,\mathbb{C}})^{\hat{\otimes}n}$ denotes
the $n$-th order Wick power of $w\in S'_{d,\mathbb{C}}$ and $\langle\cdot,\cdot\rangle$ denotes 
the dual pairing on $(S'_{d,\mathbb{C}})^{\otimes n}\times(S{}_{d,\mathbb{C}})^{\otimes n}$
which is a bilinear extension of \zx $(\cdot,\bar{\cdot})$, where $(\cdot,\cdot)$ \xz is the inner product on $(L_{d,\mathbb{C}}^{2})^{\otimes n}$
in the sense of a Gel'fand  triple. Here $V_{\mathbb{C}}$ denotes the
complexification of the real vector space $V$ and $\hat{\otimes}n$
denotes the $n$-th power symmetric tensor product. Note that $\langle:\cdot^{\otimes n}:,\cdot\rangle$,
$n\in\mathbb{N}_{0}$, in the second variable extends to $(L_{d,\mathbb{C}}^{2})^{\hat{\otimes}n}$
in the sense of an $L^{2}(\mu_{d})$ limit.

\subsection{Hida Distributions and Characterization}

By the standard construction with the Hilbert space $L^{2}(\mu_{d})$
as central space, we obtain the Gel'fand triple of Hida test functions
and Hida distributions. 
\[
(S_{d})\subset L^{2}(\mu_{d})\subset(S_{d})'.
\]
We denote the dual pairing between elements of $(S_{d})'$ and $(S_{d})$
by $\langle\!\langle\cdot,\cdot\rangle\!\rangle$. For $F\in L^{2}(\mu_{d})$
and $\varphi\in(S_{d})$, with kernel functions $F_{n}$ and $\varphi_{n}$,
respectively, the dual pairing yields 
\[
\langle\!\langle F,\varphi\rangle\!\rangle=\sum_{n=0}^{\infty}n!\langle F_{n},\varphi_{n}\rangle.
\]
This relation extends the chaos expansion to $\Phi\in(S_{d})'$ with
distribution valued kernels $\Phi_{n}\in(S'_{d,\mathbb{C}})^{\hat{\otimes}n}$
such that 
\[
\langle\!\langle\Phi,\varphi\rangle\!\rangle=\sum_{n=0}^{\infty}n!\langle\Phi_{n},\varphi_{n}\rangle,
\]
for every generalized test function $\varphi\in(S_{d})$ with kernels
$\varphi_{n}\in(S_{d,\mathbb{C}})^{\hat{\otimes}n}$, $n\in\mathbb{N}_{0}$.

Instead of repeating the detailed construction of these spaces we
present a characterization in terms of the $S$-transform. 
\begin{defn}
\label{def:S-transform}Let $\varphi\in S_{d}$ be given. We define
the Wick exponential by 
\[
e_{\mu_{d}}(\cdot,\varphi):=\frac{e^{\langle\cdot,\varphi\rangle}}{\mathbb{E}(e^{\langle\cdot,\varphi\rangle})}=C(\varphi)e^{\langle\cdot,\varphi\rangle}=\sum_{n=0}^{\infty}\frac{1}{n!}\langle:.^{\otimes n}:,\varphi^{\otimes n}\rangle\in(S_{d})
\]
and the $S$-transform of $\Phi\in(S_{d})'$ by 
\[
S\Phi(\varphi):=\langle\!\langle\Phi,e_{\mu_{d}}(\cdot,\varphi)\rangle\!\rangle.
\]
\end{defn}

\begin{example}
\label{exa:S-transform-WN}For $d\in\mathbb{N}$ the $S$-transform
of $d$-dimensional white noise $(W(t))_{t\ge0}$ is given by $SW(t)(\varphi)=\varphi(t)$,
for all $\varphi\in S_{d}$, $t\ge0$, see \cite{HKPS93}. Here $(W(t))_{t\ge0}$
is the derivative of $(B(t))_{t\ge0}$ as a Hida space valued process.
That is, each of its components takes values in $(S_{d})'$. 
\end{example}

\begin{defn}[$U$-functional]
\label{def:U-functional}A function $F:S_{d}\longrightarrow\mathbb{C}$
is called a $U$-functional if: 
\begin{enumerate}
\item For every $\varphi_{1},\varphi_{2}\in S_{d}$ the mapping $\mathbb{R}\ni\lambda\mapsto F(\lambda\varphi_{1}+\varphi_{2})\in\mathbb{C}$
has an entire extension to $z\in\mathbb{C}$. 
\item There are constants $ 0<  C_{1},C_{2}<\infty$ such that 
\[
\left|F(z\varphi)\right|\leq C_{1}\exp\big(C_{2}|z|^{2}\|\varphi\|^{2}\big),\quad\forall z\in\mathbb{C},\varphi\in S_{d}
\]
for some continuous norm $\|\cdot\|$ on $S_{d}$. 
\end{enumerate}
\end{defn}

We are now ready to state the aforementioned characterization result. 
\begin{thm}[cf.~\cite{KLPSW96}, \cite{PS91}]
\label{thm:charact-theorem} The $S$-transform defines a bijection
between the space $(S_{d})'$ and the space of $U$-functionals. In
other words, $\Phi\in(S_{d})'$ if and only if $S\Phi:S_{d}\to\mathbb{{C}}$
is a $U$-functional. 
\end{thm}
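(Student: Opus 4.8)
The plan is to prove the two implications of the claimed bijection separately, disposing of necessity quickly and reserving the real work for sufficiency. For necessity, suppose $\Phi\in(S_d)'$. By the standard construction of the Hida triple, the topology of $(S_d)$ is generated by an increasing family of Hilbertian norms $\|\cdot\|_p$ arising from second quantization, and continuity of $\Phi$ furnishes a level $p$ and a constant $C>0$ with $|\langle\!\langle\Phi,\varphi\rangle\!\rangle|\le C\,\|\varphi\|_p$ for all $\varphi\in(S_d)$. First I would note that for fixed $\varphi_1,\varphi_2\in S_d$ the map $z\mapsto e_{\mu_d}(\cdot,z\varphi_1+\varphi_2)$ is an entire $(S_d)$-valued function, since its chaos kernels are the explicit tensor powers $\frac{1}{n!}(z\varphi_1+\varphi_2)^{\otimes n}$ and these sum, in each $(S_d)_p$, to a locally uniformly convergent series; composing with the continuous functional $\Phi$ yields entirety of $z\mapsto S\Phi(z\varphi_1+\varphi_2)$, which is condition (1). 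For the bound (2), a direct computation gives
\[
\|e_{\mu_d}(\cdot,z\varphi)\|_p^2=\sum_{n=0}^\infty\frac{1}{n!}|z|^{2n}|\varphi|_p^{2n}=\exp\big(|z|^2|\varphi|_p^2\big),
\]
so that $|S\Phi(z\varphi)|\le C\exp\big(\tfrac12|z|^2|\varphi|_p^2\big)$, which is exactly the required estimate with continuous norm $|\cdot|_p$. Hence $S\Phi$ is a $U$-functional.

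For sufficiency, let $F$ be a $U$-functional; I must produce a unique $\Phi\in(S_d)'$ with $S\Phi=F$. The strategy is to read off candidate chaos kernels from the Taylor expansion of $F$ and then verify the summability condition that characterizes $(S_d)'$. Since a continuous norm on the nuclear space $S_d$ is dominated by some $|\cdot|_p$, I may assume the bound (2) holds with $\|\cdot\|=|\cdot|_p$. By condition (1) the map $\lambda\mapsto F(\lambda\varphi)$ is entire, so I would set
\[
F_n(\varphi):=\frac{1}{n!}\frac{\mathrm{d}^n}{\mathrm{d}\lambda^n}F(\lambda\varphi)\Big|_{\lambda=0}
\]
and polarize to obtain symmetric $n$-linear forms on $S_d$. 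The decisive estimate comes from Cauchy's integral formula combined with (2): for any $r>0$ one has $|F_n(\varphi)|\le r^{-n}\max_{|z|=r}|F(z\varphi)|\le C_1\,r^{-n}\exp\big(C_2r^2|\varphi|_p^2\big)$, and optimizing over $r$ (taking $r^2=n/(2C_2|\varphi|_p^2)$) yields $|F_n(\varphi)|\le C_1\big(\tfrac{2eC_2}{n}\big)^{n/2}|\varphi|_p^n$. The super-exponential gain $(C/n)^{n/2}$ hidden here is the arithmetic engine of the whole argument, since after multiplying by the combinatorial factor $n!$ it produces, via Stirling, only a geometric growth that can subsequently be defeated.

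I would then invoke the kernel theorem for the nuclear space $S_d$: each continuous symmetric $n$-linear form extends to a distributional kernel $\Phi_n\in(S'_{d,\mathbb{C}})^{\hat{\otimes}n}$, and the diagonal Cauchy estimate, pushed through polarization (which costs a factor of order $n^n/n!\sim e^n$), transfers into a bound on $|\Phi_n|_{-p}$. This still grows geometrically after attaching $n!$, so I would raise the index to a larger level $q>p$ at which the embedding $(S_d)_q\hookrightarrow(S_d)_p$ is Hilbert--Schmidt; nuclearity then supplies a gain geometric in the order $n$, and choosing $q$ large enough makes $n!\,|\Phi_n|_{-q}^2\le C\,\rho^n$ with $\rho<1$, whence $\sum_n n!\,|\Phi_n|_{-q}^2<\infty$. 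This is precisely the criterion for $\Phi:=\sum_{n}\langle{:}\cdot^{\otimes n}{:},\Phi_n\rangle$ to define an element of $(S_d)'$. A final computation testing $\Phi$ against Wick exponentials shows $S\Phi=F$, and since the Wick exponentials are total in $(S_d)$, injectivity of the $S$-transform gives uniqueness.

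I expect the main obstacle to be the sufficiency direction, and within it the passage from the scalar diagonal estimate on $F_n(\varphi)=F_n(\varphi,\dots,\varphi)$ to a genuine bound on the distributional kernel $\Phi_n$ in a fixed negative-index Hilbert norm. Two effects must be balanced against one another: polarization must not destroy the $\big(2eC_2/n\big)^{n/2}$ decay (it costs only the harmless factor $e^n$), and the kernel theorem must be applied with quantitative control so that the nuclear Hilbert--Schmidt embedding absorbs the accumulated constants and yields a \emph{summable} series rather than merely continuous kernels. Arranging the indices $p,q$ and the constants $C_1,C_2$ so that $\sum_n n!\,|\Phi_n|_{-q}^2$ actually converges is the technical heart of the argument, and everything else is bookkeeping around the $S$-transform identity.
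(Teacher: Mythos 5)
The paper offers no proof of Theorem \ref{thm:charact-theorem}: it is quoted from the literature (cf.~\cite{KLPSW96}, \cite{PS91}), so there is no in-paper argument to compare against. Your proposal is a correct outline of precisely the standard proof in those references --- the Wick-exponential norm identity $\|e_{\mu_d}(\cdot,z\varphi)\|_p^2=\exp(|z|^2|\varphi|_p^2)$ for necessity, and for sufficiency the Taylor/polarization construction of the kernels, the Cauchy estimate optimized at $r^2=n/(2C_2|\varphi|_p^2)$, the quantitative kernel theorem, and the passage to a larger index $q$ with sufficiently small Hilbert--Schmidt embedding norm so that $\sum_n n!\,|\Phi_n|_{-q}^2<\infty$ --- so it matches the approach the paper implicitly relies on.
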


Based on Theorem \ref{thm:charact-theorem} a deeper analysis of the
space $(S_{d})'$ can be developed. The following corollary concerns
the Bochner integration of functions with values in $(S_{d})'$ (for
more details and proofs see e.g.~\cite{HKPS93}, \cite{KLPSW96},
\cite{PS91} for the case $d=1$). 
\begin{cor}\label{cor:hida_integral}Let $(\Omega,\mathcal{F},m)$ be a measure
space and $\lambda\mapsto\Phi_{\lambda}$ be a mapping from $\Omega$
to $(S_{d})'$. We assume that the $S$-transform of $\Phi_{\lambda}$
fulfills the following two properties: 
\begin{enumerate}
\item The mapping $\lambda\mapsto S\Phi_{\lambda}(\varphi)$ is measurable
for every $\varphi\in S_{d}$. 
\item The $U$-functional $S\Phi_{\lambda}$ satisfies 
\[
|S\Phi_{\lambda}(z\varphi)|\leq C_{1}(\lambda)\exp\left(C_{2}(\lambda)|z|^{2}\|\varphi\Vert^{2}\right),\quad z\in\mathbb{C},\varphi\in S_{d},
\]
for some continuous norm $\Vert\cdot\Vert$ on $S_{d}$ and for some
$C_{1}\in L^{1}(\Omega,m)$, $C_{2}\in L^{\infty}(\Omega,m)$. 
\end{enumerate}
Then 
\[
\int_{\Omega}\Phi_{\lambda}\,\mathrm{d}m(\lambda)\in(S_{d})'
\]
and 
\[
S\left(\int_{\Omega}\Phi_{\lambda}\,\mathrm{d}m(\lambda)\right)(\varphi)=\int_{\Omega}S\Phi_{\lambda}(\varphi)\,\mathrm{d}m(\lambda),\quad\varphi\in S_{d}.
\]
Moreover, the integral exists as a Bochner integral in some Hilbert
subspace of $(S_{d})'.$ 
\end{cor}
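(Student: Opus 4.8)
The plan is to reduce everything to the characterization theorem (Theorem~\ref{thm:charact-theorem}) and a quantitative refinement of it. First I would define the candidate $S$-transform
\[
F(\varphi):=\int_{\Omega}S\Phi_{\lambda}(\varphi)\,\mathrm{d}m(\lambda),\qquad\varphi\in S_{d},
\]
and prove that $F$ is a $U$-functional; Theorem~\ref{thm:charact-theorem} then yields a unique $\Phi\in(S_{d})'$ with $S\Phi=F$, and this $\Phi$ is declared to be the integral. Setting $z=1$ in hypothesis~(2) gives $|S\Phi_{\lambda}(\varphi)|\le C_{1}(\lambda)\exp(C_{2}(\lambda)\|\varphi\|^{2})\le C_{1}(\lambda)\exp(\|C_{2}\|_{L^{\infty}}\|\varphi\|^{2})$, so the integrand is dominated by an $m$-integrable function (because $C_{1}\in L^{1}$) and $F$ is well defined.

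Next I would check the two defining properties of a $U$-functional. For the growth bound, the same estimate with general $z$ gives
\[
|F(z\varphi)|\le\int_{\Omega}C_{1}(\lambda)\exp\!\big(C_{2}(\lambda)|z|^{2}\|\varphi\|^{2}\big)\,\mathrm{d}m(\lambda)\le\|C_{1}\|_{L^{1}}\exp\!\big(\|C_{2}\|_{L^{\infty}}|z|^{2}\|\varphi\|^{2}\big),
\]
which is precisely Definition~\ref{def:U-functional}(2) with constants $\|C_{1}\|_{L^{1}}$ and $\|C_{2}\|_{L^{\infty}}$. For the entirety condition, I fix $\varphi_{1},\varphi_{2}\in S_{d}$ and set $g_{\lambda}(z):=S\Phi_{\lambda}(z\varphi_{1}+\varphi_{2})$; each $g_{\lambda}$ is entire by hypothesis. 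On any disc $|z|\le R$ one has the $m$-integrable dominating bound $|g_{\lambda}(z)|\le C_{1}(\lambda)\exp(\|C_{2}\|_{L^{\infty}}(R\|\varphi_{1}\|+\|\varphi_{2}\|)^{2})$. Hence $z\mapsto\int_{\Omega}g_{\lambda}(z)\,\mathrm{d}m(\lambda)$ is continuous by dominated convergence, and its contour integral over any triangle vanishes after interchanging $\int_{\Omega}$ and $\oint$ via Fubini and applying Cauchy's theorem to each $g_{\lambda}$; Morera's theorem then makes this map entire. Thus $F$ is a $U$-functional and $\Phi:=S^{-1}F\in(S_{d})'$ exists with the claimed $S$-transform identity.

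It remains to realize the object as a Bochner integral in a single Hilbert subspace. Here I would invoke the \emph{quantitative} form of the characterization theorem implicit in the constructive proofs of \cite{PS91,KLPSW96}: a $U$-functional bound with constants $C_{1}(\lambda)$ and $C_{2}(\lambda)\le\|C_{2}\|_{L^{\infty}}$ relative to a \emph{fixed} continuous norm forces $\Phi_{\lambda}$ into one fixed Hilbert space $\mathcal{H}_{-q}$ of the stratification $(S_{d})'=\bigcup_{q}\mathcal{H}_{-q}$, where $q$ depends only on $\|C_{2}\|_{L^{\infty}}$ and the norm, with a uniform estimate $\|\Phi_{\lambda}\|_{-q}\le K\,C_{1}(\lambda)$ and $K$ independent of $\lambda$. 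Weak measurability of $\lambda\mapsto\Phi_{\lambda}$ against the total set of Wick exponentials follows from hypothesis~(1), so by Pettis' theorem and separability of $\mathcal{H}_{-q}$ the map is strongly measurable; since $\int_{\Omega}\|\Phi_{\lambda}\|_{-q}\,\mathrm{d}m\le K\|C_{1}\|_{L^{1}}<\infty$, Bochner's criterion yields the integral in $\mathcal{H}_{-q}$. Because $S\Phi(\varphi)=\langle\!\langle\Phi,e_{\mu_{d}}(\cdot,\varphi)\rangle\!\rangle$ is a continuous linear functional on $\mathcal{H}_{-q}$, it commutes with the Bochner integral, identifying the Bochner integral with $\Phi$ and re-confirming the $S$-transform formula.

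The main obstacle is exactly this last quantitative step: extracting from Theorem~\ref{thm:charact-theorem} a \emph{single} space $\mathcal{H}_{-q}$ together with the \emph{uniform} bound $\|\Phi_{\lambda}\|_{-q}\le K\,C_{1}(\lambda)$. This requires tracking, through the constructive proof, how the growth constants control the Hilbertian norm, and it is the precise point where $C_{2}\in L^{\infty}$ (rather than merely finite for each $\lambda$) is indispensable: a fixed bound on $C_{2}$ pins down $q$ and $K$ once and for all, whereas a genuinely $\lambda$-dependent $C_{2}$ could force $q\to\infty$ and destroy the existence of a common Hilbert subspace.
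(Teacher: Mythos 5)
The paper does not actually prove this corollary: it is stated as a quoted result, with the proof deferred to \cite{HKPS93}, \cite{KLPSW96} and \cite{PS91} (for $d=1$). Your reconstruction follows exactly the strategy of those references --- integrate the $S$-transforms, verify the two $U$-functional properties, invoke Theorem \ref{thm:charact-theorem} to obtain $\Phi\in(S_d)'$, and then upgrade to a Bochner integral via the quantitative form of the characterization theorem. Your identification of the decisive role of $C_2\in L^{\infty}$ --- it pins down a single Hilbert subspace $\mathcal{H}_{-q}$ and a uniform estimate $\|\Phi_\lambda\|_{-q}\le K\,C_1(\lambda)$, after which Pettis measurability and Bochner's criterion finish the job --- is precisely the key point of the proof of \cite[Thm.~17]{KLPSW96}, which the paper itself points to in its closing remark. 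So in structure your argument is the standard one and is essentially correct.

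One step, however, is asserted rather than proved, and it is exactly the kind of step the cited literature has to work for: in the Morera argument you dominate $|g_\lambda(z)|=|S\Phi_\lambda(z\varphi_1+\varphi_2)|$ for \emph{complex} $z$ by $C_1(\lambda)\exp\bigl(\|C_2\|_{L^{\infty}}(R\|\varphi_1\|+\|\varphi_2\|)^2\bigr)$. Hypothesis (2) bounds the entire extension of $S\Phi_\lambda$ only along complex rays through the origin, i.e.\ at points $z\varphi$ with $\varphi\in S_d$ \emph{real}; for complex $z$ the point $z\varphi_1+\varphi_2$ lies in the complexification $S_{d,\mathbb{C}}$ and is not of that form (for real $z$ your bound is fine: take $z=1$ and $\varphi=z\varphi_1+\varphi_2$ in the hypothesis). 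To dominate at genuinely complex arguments one needs the auxiliary lemma, extracted from the constructive proof of the characterization theorem (Taylor expansion along rays, Cauchy estimates, polarization), that a $U$-functional bound $C_1\exp(C_2|z|^2\|\varphi\|^2)$ forces $|F(\psi)|\le\alpha\,C_1\exp(\beta\,C_2\|\psi\|^2)$ for all $\psi\in S_{d,\mathbb{C}}$, with universal constants $\alpha,\beta$. With that lemma your dominating function is still $m$-integrable (only constants change) and the Morera/Fubini argument closes; as written, the domination step is a gap, since it silently applies a ray bound off the rays.
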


\begin{example}[Donsker's delta function]
\label{exa:S-transform-delta}As a classical example of a Hida distribution
we have the Donsker delta function. More precisely, the following
Bochner integral is a well defined element in $(S_{d})'$ 
\[
\delta(x-B(t))=\frac{1}{(2\pi)^{d}}\int_{\mathbb{R}^{d}}e^{i(\lambda,x-B(t))_{\mathbb{R}^{d}}}\,\mathrm{d}\lambda,\quad x\in\mathbb{R}^{d}.
\]
The $S$-transform of $\delta(x-B(t))$ for any $z\in\mathbb{C}$
and $\varphi\in S_{d}$ is given by 
\begin{equation}
S\delta(x-B(t))(z\varphi)=\frac{1}{(2\pi t)^{d/2}}\exp\left(-\frac{1}{2t}\sum_{j=1}^{d}(x_{j}-\langle z\varphi_{j},\eta_{t}\rangle)^{2}\right).\label{eq:S-Donsker}
\end{equation}
\end{example}

It is well known that the Wick product is a well defined operation
in Gaussian analysis, see for example \cite{KLS96}, \cite{HOUZ09}
and \cite{KSWY95}. 
\begin{defn}
\label{def:Wick_product}For any $\Phi,\Psi\in(S_{d})'$ the Wick
product $\Phi\Diamond\Psi$ is defined by 
\begin{equation}
S(\Phi\Diamond\Psi)=S\Phi\cdot S\Psi.\label{eq:Wick_product}
\end{equation}
Since the space of $U$-functional s  is an algebra, by Theorem\ \ref{thm:charact-theorem}
there exists an element $\Phi\Diamond\Psi\in(S_{d})'$ such that (\ref{eq:Wick_product})
holds. 
\end{defn}

\section{Stochastic Currents of Brownian Motion}

\label{sec:Stochastic-Current}In this section we investigate in the
framework of white noise analysis the following functional 
\begin{equation}
\varphi\mapsto\int_{0}^{T}(\varphi(B(t)),\mathrm{d}B(t))_{\mathbb{R}^{d}},\label{eq:current}
\end{equation}
on a given space of vector fields $\varphi:\mathbb{R}^{d}\longrightarrow\mathbb{R}^{d}$.
The functional (\ref{eq:current}) can be represented via its integral
kernel 
\[
\xi(x):=\int_{0}^{T}\delta(x-B(t))\,\mathrm{d}B(t),\quad x\in\mathbb{R}^{d}.
\]

We interpret the stochastic integral as an extended Skorokhod integral
\begin{align*}
 & \int_{0}^{T}\delta(x-B(t))\,\mathrm{d}B(t)\\
 & :=\left(\int_{0}^{T}\delta(x-B(t))\Diamond W_{1}(t)\,\mathrm{d}t,\ldots,\int_{0}^{T}\delta(x-B(t))\Diamond W_{d}(t)\,\mathrm{d}t\right)\\
 & =:(\xi_{1}(x),\ldots,\xi_{d}(x)),
\end{align*}
where $W=(W_{1},\ldots,W_{d})$ is the white noise process as in Example
\ref{exa:S-transform-WN}. If the integrand is
a square integrable function then this stochastic integral coincides
with the Skorokhod integral. In this interpretation, we call $\xi(x)$
stochastic currents of Brownian motion.

Below we show that $\xi(x)$, $x\in\mathbb{R}^{d}\backslash\{0\}$
is a well defined functional in $(S_{d})'$. From now on, $C$ is
a real constant whose value is immaterial and may change from line
to line. 
\begin{thm}
\label{thm:current-Hdistr} For $x\in\mathbb{R}^{d}\backslash\{0\}$,
$0<T<\infty$, the Bochner integral 
\begin{equation}
\xi_{i}(x)=\int_{0}^{T}\delta(x-B(t))\Diamond W_{i}(t)\,\mathrm{d}t\label{eq:current-bochner}
\end{equation}
is a Hida distribution and its $S$-transform at $\varphi\in S_{d}$
is given by 
\begin{equation}
S\left(\int_{0}^{T}\delta(x-B(t))\Diamond W_{i}(t)\,\mathrm{d}t\right)(\varphi)=\frac{1}{(2\pi)^{^{d/2}}}\int_{0}^{T}\frac{1}{t^{d/2}}e^{-\frac{|x-\langle\eta_{t},\varphi\rangle|_{\mathbb{R}^{d}}^{2}}{2t}}\varphi_{i}(t)\,\mathrm{d}t.\label{eq:S-transf-current-bm-d}
\end{equation}
\end{thm}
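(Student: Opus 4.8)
The plan is to apply Corollary~\ref{cor:hida_integral} directly, with measure space $(\Omega,\mathcal F,m)=([0,T],\mathcal B([0,T]),\mathrm dt)$ and integrand $\Phi_t:=\delta(x-B(t))\Diamond W_i(t)$. First I would record that $\Phi_t\in(S_d)'$ for each fixed $t\in(0,T]$, being a Wick product of two Hida distributions (Definition~\ref{def:Wick_product}), and compute its $S$-transform. By \eqref{eq:Wick_product}, Example~\ref{exa:S-transform-WN}, and Example~\ref{exa:S-transform-delta} (the latter with $z=1$),
\begin{equation*}
S\Phi_t(\varphi)=S\delta(x-B(t))(\varphi)\cdot SW_i(t)(\varphi)=\frac{1}{(2\pi t)^{d/2}}\exp\!\left(-\frac{|x-\langle\eta_t,\varphi\rangle|_{\mathbb R^d}^2}{2t}\right)\varphi_i(t),
\end{equation*}
where $\langle\eta_t,\varphi\rangle:=(\langle\eta_t,\varphi_1\rangle,\dots,\langle\eta_t,\varphi_d\rangle)$. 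Once the hypotheses of the corollary are verified, the interchange of $S$-transform and integral yields exactly \eqref{eq:S-transf-current-bm-d}, so the whole theorem reduces to checking the two conditions of that corollary.

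Condition~(1), measurability of $t\mapsto S\Phi_t(\varphi)$ for fixed $\varphi$, is immediate: the right-hand side above is a continuous (indeed smooth) function of $t\in(0,T]$, hence measurable.

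The real work is Condition~(2). I would substitute $z\varphi$ for $\varphi$, write $z=u+iv$, and take the modulus, which amounts to bounding the real part of the exponent. Using the elementary bounds $|\langle\eta_t,\varphi\rangle|_{\mathbb R^d}\le\sqrt t\,\|\varphi\|$ and $|\varphi_i(t)|\le\|\varphi\|$ for a suitable continuous norm $\|\cdot\|$ on $S_d$, a short computation gives
\begin{equation*}
\mathrm{Re}\!\left(-\frac{|x-z\langle\eta_t,\varphi\rangle|_{\mathbb R^d}^2}{2t}\right)\le-\frac{|x|^2}{2t}+\frac{|u|\,|x|\,\|\varphi\|}{\sqrt t}+\frac{v^2\|\varphi\|^2}{2}.
\end{equation*}
The decisive step is to keep part of the Gaussian decay: by Young's inequality $\tfrac{|u|\,|x|\,\|\varphi\|}{\sqrt t}\le\tfrac{|x|^2}{4t}+u^2\|\varphi\|^2$, so the exponent is bounded by $-\tfrac{|x|^2}{4t}+|z|^2\|\varphi\|^2$. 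Absorbing the prefactor $|z|$ through the elementary inequality $s\le C e^{s^2}$ then produces a bound of the required form with
\begin{equation*}
C_1(t)=\frac{C}{t^{d/2}}\exp\!\left(-\frac{|x|^2}{4t}\right),\qquad C_2(t)\equiv C.
\end{equation*}

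Finally I would observe that $C_2$ is constant, hence in $L^\infty([0,T])$, while $C_1\in L^1([0,T])$ precisely because $x\neq0$: the factor $e^{-|x|^2/(4t)}$ forces $C_1(t)\to0$ as $t\to0^+$ and thereby defeats the otherwise non-integrable singularity $t^{-d/2}$. This is the crux, and also the exact point where the hypothesis $x\neq0$ enters; for $x=0$ and $d\ge2$ the integrand $t^{-d/2}$ is non-integrable at the origin, which foreshadows the failure treated later in the paper. With both conditions verified, Corollary~\ref{cor:hida_integral} gives $\xi_i(x)\in(S_d)'$ together with the interchange formula, completing the proof. The main obstacle is thus the estimate in Condition~(2): one must retain enough of the Gaussian factor $e^{-|x|^2/(2t)}$ to secure $L^1$-integrability in $t$ while simultaneously dominating the complex cross term uniformly in $z$.
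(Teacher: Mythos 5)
Your proposal is correct and follows essentially the same route as the paper: both reduce the theorem to Corollary~\ref{cor:hida_integral} by computing $S\Phi_t$ via the Wick product, noting measurability, and establishing a bound of the form $C_1(t)\exp\left(C|z|^2\|\varphi\|^2\right)$ with $C_1(t)$ of order $t^{-d/2}e^{-c|x|^2/t}$, integrable on $(0,T]$ precisely because $x\neq0$. The only technical difference is in the cross-term estimate: the paper bounds $|\langle\eta_t,z\varphi\rangle|\le t|z||\varphi|_\infty$ so that the full Gaussian factor $e^{-|x|^2/(2t)}$ survives (integrability then being checked via an explicit incomplete gamma formula), whereas you use Cauchy--Schwarz plus Young's inequality and retain only $e^{-|x|^2/(4t)}$; both devices work equally well.
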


\begin{proof}
First we compute the $S$-transform of the integrand 
\[
(0,T]\ni t\mapsto\Phi_{i}(t):=\delta(x-B(t))\Diamond W_{i}(t).
\]
Using Definition \ref{def:Wick_product}, Example \ref{exa:S-transform-delta},  and Example
\ref{exa:S-transform-WN} for any $\varphi\in S_{d}$ we have 
\begin{align*}
t\mapsto S\Phi_{i}(t)(\varphi) & =S\big(\delta(x-B(t))\big)(\varphi)SW_{i}(t)(\varphi)\\
 & =\frac{1}{(2\pi t)^{d/2}}\exp\left(-\frac{1}{2t}|x-\langle\eta_{t},\varphi\rangle|_{\mathbb{R}^{d}}^{2}\right)\varphi_{i}(t),
\end{align*}
which is Borel measurable on $(0,T]$. Furthermore, for any $z\in\mathbb{C}$,
$t\in(0,T]$ and all $\varphi\in S_{d}$ we obtain 
\begin{eqnarray*}
 &  & \big|S\Phi_{i}(t)(z\varphi)\big|\\
 & \leq & \big|\frac{1}{\left(2\pi t\right)^{d/2}}\exp\left(-\frac{1}{2t}\big|x-\langle\eta_{t},z\varphi\rangle\big|_{\mathbb{R}^{d}}^{2}\right)z\varphi_{i}(t)\big|\\
 & \leq & \frac{1}{\left(2\pi t\right)^{d/2}}\exp\left(-\frac{1}{2t}|x|^{2}\right)\exp\left(\frac{1}{t}|x|\big|\langle\eta_{t},z\varphi\rangle\big|\right)\exp\left(\frac{1}{2t}|z|^{2}\big|\langle\eta_{t},\varphi\rangle\big|^{2}\right)\big|z\varphi_{i}(t)\big|\\
 & \leq & \frac{1}{\left(2\pi t\right)^{d/2}}\exp\left(-\frac{1}{2t}|x|^{2}\right)\exp\left(|x||z|\big|\varphi\big|_{\infty}\right)\exp\left(\frac{1}{2}|z|^{2}\big|\varphi\big|^{2}\right)\exp\left(|z|\big|\varphi\big|_{\infty}\right)\\
 & \leq & \frac{C}{\left(2\pi t\right)^{d/2}}\exp\left(-\frac{1}{2t}|x|^{2}\right)\exp\left(\frac{1}{2}|x|^{2}\right)\exp\left(\frac{1}{2}|z|^{2}\big|\varphi\big|^{2}\right)\exp\left(\frac{1}{2}|z|^{2}\big|\varphi\big|_{\infty}^{2}\right)\\
 & \leq & \frac{C}{\left(2\pi t\right)^{d/2}}\exp\left(-\frac{1}{2t}|x|^{2}\right)\exp\left(\frac{1}{2}|x|^{2}\right)\exp\left(\frac{1}{2}|z|^{2}\|\varphi\|^{2}\right),
\end{eqnarray*}
where $\|\cdot\|$ is the continuous norm on $S_{d}$ defined by $\|\varphi\|:=\sqrt{\big|\varphi\big|^{2}+\big|\varphi\big|_{\infty}^{2}}$.
The first factor $\frac{C}{\left(2\pi t\right)^{d/2}}\exp\left(-\frac{1}{2t}|x|^{2}\right)$
is integrable with respect to the Lebesgue measure $\mathrm{d}t$
on $[0,T]$. To be more precise, using the formula 
\[
\int_{u}^{\infty}y^{\nu-1}e^{-\mu y} \, \mathrm{d}  y=\mu^{-\nu}\Gamma\left(\nu,\mu u\right),\quad u>0,\mathrm{Re}(\mu)>0,
\]
where $\Gamma\left(\cdot,\cdot\right)$ is the complementary incomplete
gamma function, one can show that 
\[
\int_{0}^{T}\frac{1}{t^{d/2}}\exp\left(-\frac{1}{2t}|x|^{2}\right)\mathrm{d}t=2^{d/2-1}|x|^{2-d}\Gamma\left(\frac{d}{2}-1,\frac{|x|^{2}}{2T}\right).
\]
As the second factor $\exp\left(\frac{1}{2}|x|^{2}\right)\exp\left(\frac{1}{2}|z|^{2}\|\varphi\|^{2}\right)$
 is independent of $t\in(0,T]$, the result now follows from Corollary
\ref{cor:hida_integral}. 
\end{proof}

\begin{cor}
For $x=0$ and $d=1$ the stochastic current $\xi(0)$ is a Hida distribution, that is, the Bochner
integral 
\[
\xi(0)=\int_{0}^{T}\delta(B(t))\Diamond W(t)\,\mathrm{d}t
\]
is a Hida distribution. Moreover its $S$-transform at $\varphi\in S_{1}$
is given by 
\[
S\left(\int_{0}^{T}\delta(B(t))\Diamond W(t)\,\mathrm{d}t\right)(\varphi)=\frac{1}{\sqrt{2\pi}}\int_{0}^{T}\frac{1}{\sqrt{t}}e^{-\frac{\langle\eta_{t},\varphi\rangle^{2}}{2t}}\varphi(t)\,\mathrm{d}t.
\]
\end{cor}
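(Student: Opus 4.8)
The plan is to follow the same template as the proof of Theorem~\ref{thm:current-Hdistr}, specializing to $d=1$ and $x=0$, and to pinpoint exactly where the argument for nonzero $x$ relied on the factor $\exp(-|x|^{2}/(2t))$ that is now absent. First I would compute the $S$-transform of the integrand $t\mapsto\Phi(t):=\delta(B(t))\Diamond W(t)$ via Definition~\ref{def:Wick_product} together with Examples~\ref{exa:S-transform-delta} and~\ref{exa:S-transform-WN} (setting $x=0$, $d=1$ in \eqref{eq:S-Donsker}), obtaining
\[
S\Phi(t)(\varphi)=\frac{1}{\sqrt{2\pi t}}\exp\left(-\frac{\langle\eta_{t},\varphi\rangle^{2}}{2t}\right)\varphi(t),
\]
which is Borel measurable on $(0,T]$, so hypothesis~(1) of Corollary~\ref{cor:hida_integral} is satisfied.

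Second, I would establish the $U$-functional estimate of hypothesis~(2). Replacing $\varphi$ by $z\varphi$ and taking moduli, the only point that differs from Theorem~\ref{thm:current-Hdistr} is the exponential factor: since $\langle\eta_{t},\varphi\rangle\in\mathbb{R}$, one has $|\exp(-\tfrac{1}{2t}z^{2}\langle\eta_{t},\varphi\rangle^{2})|=\exp(-\tfrac{1}{2t}\mathrm{Re}(z^{2})\langle\eta_{t},\varphi\rangle^{2})\le\exp(\tfrac{1}{2t}|z|^{2}\langle\eta_{t},\varphi\rangle^{2})$. Using $|\langle\eta_{t},\varphi\rangle|\le t|\varphi|_{\infty}$, this is controlled by $\exp(\tfrac{t}{2}|z|^{2}|\varphi|_{\infty}^{2})\le\exp(\tfrac{T}{2}|z|^{2}|\varphi|_{\infty}^{2})$, and after absorbing the polynomial prefactor $|z\varphi(t)|\le|z||\varphi|_{\infty}$ into an exponential one arrives at
\[
|S\Phi(t)(z\varphi)|\le\frac{C}{\sqrt{t}}\exp\left(C_{2}|z|^{2}\|\varphi\|^{2}\right),
\]
with the same continuous norm $\|\varphi\|=\sqrt{|\varphi|^{2}+|\varphi|_{\infty}^{2}}$ as before and $C_{2}$ a constant depending only on $T$.

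Third, and this is the crux, I would verify the integrability demanded by Corollary~\ref{cor:hida_integral}: the $\lambda$-dependent prefactor here is $C_{1}(t)=C/\sqrt{t}$, and for $d=1$ we have $\int_{0}^{T}t^{-1/2}\,\mathrm{d}t=2\sqrt{T}<\infty$, so $C_{1}\in L^{1}([0,T])$ while $C_{2}\in L^{\infty}([0,T])$. This is precisely the step that collapses when $x=0$ and $d\ge 2$: there the prefactor is $t^{-d/2}$, which is not Lebesgue integrable near $0$, and the vanishing of $\exp(-|x|^{2}/(2t))$ that rescued the integral in Theorem~\ref{thm:current-Hdistr} is no longer available. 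Hence the restriction $d=1$ is essential and constitutes the real content of the statement; I expect this integrability check to be the main obstacle in the sense that it is the single place where the hypotheses are genuinely used. With integrability in hand, Corollary~\ref{cor:hida_integral} yields $\xi(0)\in(S_{1})'$ and permits interchanging the $S$-transform with the integral, giving $S\xi(0)(\varphi)=\int_{0}^{T}S\Phi(t)(\varphi)\,\mathrm{d}t$, which is exactly the claimed formula.
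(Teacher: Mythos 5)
Your proposal is correct and follows essentially the same route as the paper: specialize the proof of Theorem~\ref{thm:current-Hdistr} to $x=0$, $d=1$, obtain the bound $|S\Phi(t)(z\varphi)|\le C t^{-1/2}\exp\left(C_{2}|z|^{2}\|\varphi\|^{2}\right)$, note that $t^{-1/2}$ is integrable on $(0,T]$, and apply Corollary~\ref{cor:hida_integral}. The only (immaterial) difference is that you bound $|\langle\eta_{t},\varphi\rangle|\le t|\varphi|_{\infty}$, giving a $T$-dependent constant $C_{2}$, whereas the paper's estimate uses Cauchy--Schwarz, $|\langle\eta_{t},\varphi\rangle|\le\sqrt{t}\,|\varphi|$, yielding the constant $\tfrac{1}{2}$.
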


\begin{proof}
By adapting the proof of Theorem \ref{thm:current-Hdistr} we obtain
for any $z\in\mathbb{C}$, $t\in(0,T]$ and all $\varphi\in S_{1}$
\[
|S\Phi(t)(z\varphi)|\le\frac{C}{\left(2\pi t\right)^{1/2}}\exp\left(\frac{1}{2}|z|^{2}\|\varphi\|^{2}\right).
\]
Since the function $(0,T]\ni t\mapsto t^{-1/2}$ is integrable with
respect to the Lebesgue measure, Corollary \ref{cor:hida_integral}
implies the statement of the corollary.
\end{proof}
\begin{rem}
We would like to comment on the chaos expansion of the stochastic
current of Brownian motion. To this end we identify the space $L_{d}^{2}$
with the Hilbert space $L^{2}(m):=L^{2}(E,\mathcal{B},m)$, where
$E:=\mathbb{R}\times\{1,\ldots,d\}$, $\mathcal{B}$ is the product
$\sigma$-algebra on $E$ of the Borel $\sigma$-algebra on $\mathbb{R}$
and the power set of $\{1,\ldots,d\}$ and $m=\mathrm{d}x\otimes\Sigma$
is the product measure of the Lebesgue measure on $\mathbb{R}$ and
the counting measure on $\{1,\ldots,d\}$. That is, for all $f,g\in L^{2}(m)$
we have 
\[
(f,g)_{L^{2}(m)}=\int_{E}f(x,i)g(x,i)\,\mathrm{d}m(x,i)=\sum_{i=1}^{d}\int_{\mathbb{R}}f(x,i)g(x,i)\,\mathrm{d}x.
\]
The $n$-th order chaos of a Hida distribution can be computed by
the $n$-th order derivative of its $S$-transform at the origin.
More precisely, for $\Psi\in(S_{d})'$ and $\varphi\in(S_{d})$ consider
the function 
\[
\mathbb{R}\ni s\mapsto U(s):=S\Psi(s\varphi)\in\mathbb{C}.
\]
Then the $n$-th order chaos $\Psi^{(n)}$ of $\Psi$ applied to $\varphi^{\otimes n}\in S_{d,\mathbb{C}}^{\hat{\otimes}n}$
is given by 
\[
\langle\Psi^{(n)},\varphi^{\otimes n}\rangle=\frac{1}{n!}\frac{\mathrm{d}^{n}}{\mathrm{d}s^{n}}U(s)\big|_{s=0},
\]
see \cite[Lemma~3.3.5]{O94}. In our situation we have 
\[
S\Phi_{i}(s\varphi)=\frac{1}{(2\pi t)^{d/2}}\exp\left(-\frac{1}{2t}|x-\langle\eta_{t},s\varphi\rangle|_{\mathbb{R}^{d}}^{2}\right)s\varphi_{i}(t),\quad i\in\{1,\ldots,d\}.
\]
Hence, for the stochastic currents of Brownian motion the first chaos
are given by 
\begin{align*}
\xi^{(0)}(x) & =(0,\ldots,0).\\
\xi_{i}^{(1)}(x) & =\left(\underbrace{0,\ldots,0}_{i-1},\frac{1}{(2\pi)^{d/2}}\int_{0}^{T}\frac{1}{t^{d/2}}e^{-\frac{|x|^{2}}{2t}}\delta_{t}\,\mathrm{d}t,0,\ldots,0\right).\\
\left(\xi_{i}^{(2)}(x)\right)_{j,k} & =-\frac{1}{4(2\pi)^{d/2}}\int_{0}^{T}\frac{1}{t^{d/2+1}}e^{-\frac{|x|^{2}}{2t}}\left(\mathrm{Id}_{ki}x_{j}\eta_{t}\otimes\delta_{t}+\mathrm{Id}_{ji}x_{k}\delta_{t}\otimes\eta_{t}\right)\mathrm{d}t,
\end{align*}
where $\mathrm{Id}$ denotes the identity matrix on $\mathbb{R}^{d}$
and $\delta_{t}$ denotes the Dirac distribution at $t>0$. Note that
for $x=0$ and $d>1$ the first chaos $\xi_{i}^{(1)}(0)$ is divergent,
hence in this case $\xi(0)$ cannot be a Hida distribution. In all
the other cases the integrals are well defined as Bochner integrals
in a suitable Hilbert subspace of ($S'_{d})^{\hat{\otimes}n}$, $n=0,1,2.$
This follows from the estimates for integrability we derived in the
proof of Theorem \ref{thm:current-Hdistr}. Indeed the estimates derived
to apply Corollary \ref{cor:hida_integral} imply that the integrands
$\Phi_{i}$, $1\le i\le d$, are Bochner integrable in some Hilbert
subspace $H_{-}$ of $(S'_{d})$ equipped with a norm $\|\cdot\|_{-}$,
see proof of \cite[Thm.~17]{KLPSW96}. More precisely,
there one shows that $\|\Phi_{i}\|_{-}$, $1\le i\le d$, is integrable.
That implies Bochner integrability of the kernels of $n$-th order
in the generalized chaos decomposition in a suitable Hilbert subspace
of $S'(\mathbb{R}^{n})$.
\end{rem}

\section{Conclusion and Outlook}

In this paper we give a mathematical rigorously meaning to the stochastic
current $\xi(x)$, $x\in\mathbb{R}^{d}\setminus\{0\}$  and $\xi(0)$,
$0\in\mathbb{R}$, of Brownian motion in the framework of white
noise analysis. On the other hand, for  $x=0\in\mathbb{R}^{d}$,
$d>1$,  we showed that $\xi$(0) is not a Hida distribution.
The first orders of the chaos expansion leave open whether the $\xi(x)$,
$x\in\mathbb{R}^{d}\backslash\{0\}$, are regular generalized functions
or even square integrable. That is, it is not obvious whether $\xi^{(n)}(x)\in(L_{d,\mathbb{C}}^{2})^{\hat{\otimes}n}$
or not for $x\in\mathbb{R}^{d}\backslash\{0\}$ and $n\in\mathbb{N}$.
In a future paper we plan to extend these results to a larger class
of stochastic processes, e.g., fractional Brownian motion and grey
Brownian motion.

\subsection*{Acknowledgments}

This work was partially supported by a grant from the Niels Hendrik
Abel Board and by the Center for Research in Mathematics and Applications
(CIMA) related with the Statistics, Stochastic Processes and Applications
(SSPA) group, through the grant UIDB/MAT/04674/2020 of FCT-Funda{\c c\~a}o
para a Ci{\^e}ncia e a Tecnologia, Portugal. We gratefully acknowledge
the financial support by the DFG through the project GR 1809/14-1.

\global\long\def\etalchar#1{$^{#1}$}%


\begin{thebibliography}{KLP{\etalchar{+}}96}
\bibitem[Din81]{Di81} S.~Dineen. \newblock {\em Complex Analysis
in Locally Convex Spaces}, volume~57 of {\em Mathematical Studies}.
\newblock North-Holland Publ.\ Co., Amsterdam, 1981.

\bibitem[Fed96]{Federer1996} H.~Federer. \newblock {\em Geometric
Measure Theory}. \newblock Berlin: Springer-Verlag, repr. of the
1969 ed. edition, 1996.

\bibitem[FGGT05]{Flandoli2005} F.~Flandoli, M.~Gubinelli, M.~Giaquinta,
and V.~M. Tortorelli. \newblock Stochastic currents. \newblock
{\em Stochastic Process.\ Appl.}, 115(9):1583--1601, 2005.

\bibitem[FGR09]{Flandoli2009} F.~Flandoli, M.~Gubinelli, and F.~Russo.
\newblock {On the regularity of stochastic currents, fractional
Brownian motion and applications to a turbulence model}. \newblock
{\em Ann.\ Henri Poincar {\'e}}, 45(2):545--576, 2009.

\bibitem[FT10]{Flandoli2010} F.~Flandoli and C.~A. Tudor. \newblock
{Brownian and fractional Brownian stochastic currents via Malliavin
calculus}. \newblock {\em J.\ Funct.\ Anal.}, 258(1):279--306,
2010.

\bibitem[GT13]{Guo2013} J.~Guo and J.~Tian. \newblock Brownian
stochastic current: white noise approach. \newblock {\em J.\ Math.\ Res.\ Appl.},
33(5):625--630, 2013.

\bibitem[Guo14]{Guo2014} J.~Guo. \newblock {Stochastic current
of bifractional Brownian Motion}. \newblock {\em J.\ Appl.\ Math.},
2014:10, 2014. \newblock Id/No 762484.

\bibitem[GV68]{GV68} I.~M. Gel'fand and N.~{Ya}. Vilenkin. \newblock
{\em Generalized Functions}, volume~4. \newblock Academic Press,
Inc., New York and London, 1968.

\bibitem[HKPS93]{HKPS93} T.~Hida, H.-H. Kuo, J.~Potthoff, and L.~Streit.
\newblock {\em White Noise. An Infinite Dimensional Calculus}.
\newblock Kluwer Academic Publishers, Dordrecht, 1993.

\bibitem[HOUZ10]{HOUZ09} H.~Holden, B.~Oksendal, J.~Ub{ø}e,
and T.~S. Zhang. \newblock {\em {Stochastic Partial Differential
Equations: A Modeling, White Noise Functional Approach}}. \newblock
Universitext. Springer, 2 edition, December 2010.

\bibitem[KLP{\etalchar{+}}96]{KLPSW96} Y.~G. Kondratiev, P.~Leukert,
J.~Potthoff, L.~Streit, and W.~Westerkamp. \newblock Generalized
functionals in {G}aussian spaces: The characterization theorem revisited.
\newblock {\em J.\ Funct.\ Anal.}, 141(2):301--318, 1996.

\bibitem[KLS96]{KLS96} Y.~G. Kondratiev, P.~Leukert, and L.~Streit.
\newblock Wick calculus in {G}aussian analysis. \newblock {\em
Acta Appl.\ Math.}, 44:269--294, 1996.

\bibitem[KSWY98]{KSWY95} Y.~G. Kondratiev, L.~Streit, W.~Westerkamp,
and J.-A. Yan. \newblock Generalized functions in infinite dimensional
analysis. \newblock {\em Hiroshima Math.\ J.}, 28(2):213--260,
1998.

\bibitem[Kuo96]{Kuo96} H.-H. Kuo. \newblock {\em White Noise Distribution
Theory}. \newblock CRC Press, Boca Raton, New York, London and Tokyo,
1996.

\bibitem[Mor16]{Morgan2016b} F.~Morgan. \newblock {\em Geometric
Measure Theory. A Beginner's Guide}. \newblock Amsterdam: Elsevier/Academic
Press, 5th edition edition, 2016.

\bibitem[Oba94]{O94}N.~Obata, \newblock {\em White Noise Calculus
and Fock Space}. vol. 1577, (Berlin, Heidelberg and New York: Springer-Verlag,
1994).

\bibitem[PS91]{PS91} J.~Potthoff and L.~Streit. \newblock A characterization
of {H}ida distributions. \newblock {\em J.\ Funct.\ Anal.},
101:212--229, 1991.

\bibitem[RS75]{RS75a} M.~Reed and B.~Simon. \newblock {\em Methods
of Modern Mathematical Physics}, volume~II. \newblock Academic
Press, Inc., New York and London, 1975. 
\end{thebibliography}
\end{document}